\newtheorem{theorem}{Theorem}
\newtheorem{remark}[theorem]{Remark}
\newtheorem{definition}[theorem]{Definition}
\newtheorem{proposition}[theorem]{Proposition}
\renewcommand{\div}{\mathrm{div}\,}
\begin{document}

\title[An improved spectral inequality]{An improved  spectral inequality  for sums of eigenfunctions}
\author{Axel Osses, \; \; Faouzi Triki}
\thanks{Axel Osses(Departamento de Ingenier\'{\i}a Matem\'atica, Center for Mathematical Modeling. Universidad de Chile, Chile; axosses@dim.uchile.cl)} 
\thanks{Faouzi Triki(Laboratoire Jean Kuntzmann, Universit\'e de Grenoble-Alpes, France; faouzi.triki@univ-grenoble-alpes.fr)}

\maketitle

\begin{abstract}
We establish a new spectral inequality for the quantified estimation of the $H^s$-norm, $s\ge 0$ of a finite linear combination of eigenfunctions in a domain in terms of its $H^s$-norm in a strictly open subset of the whole domain. The corresponding upper bound depends exponentially on the square root of the frequency number associated to the linear combination.
\end{abstract}
\vspace{0.5cm}
{ Keywords:} Spectral inequalities; Unique continuation; Frequency number. 

\section{Introduction}
Let $\Omega\subset{\mathbb R}^d, d\geq 2$, be a $C^2$ bounded domain 
with boundary $\partial\Omega$. Let us consider the eigenvalues and eigenfunctions $\{(\lambda_k, \varphi_k)\}_{k \geq 1}$ of the elliptic operator:
\begin{eqnarray}\label{eigen}
        -\div(\gamma\nabla \varphi_k) &=& \lambda_k \varphi_k,\text{ in }\Omega,\\
        \varphi_k &=& 0,\text{ on }\partial \Omega,
\end{eqnarray}
for $\gamma\in C^2(\overline \Omega)$ with $\gamma(x)\ge\gamma_0>0$  in $\Omega$, and where $\varphi_k\in H^1_0(\Omega)$ form an orthonormal basis in $L^2(\Omega)$ with a non decreasing sequence of eigenvalues. We will consider only a finite number of eigenvalues with maximal value $\lambda$ (hence until an index $K$ including eventually the repeated multiplicities):
\begin{equation}\label{eigenvalues}
0<\lambda_1\le\lambda_2\le\ldots\le\lambda_k\le\lambda_K=\lambda.
\end{equation}
We recall the following spectral inequality for a finite linear combination of eigenfunctions of Lebeau and Jerison:
\begin{theorem}[\cite{Lebeau-Jerison}]\label{thm:Lebeau-Jerison} 
        Given an nonempty open set $\omega\subset\Omega$, $\omega\neq\Omega$, there exist constants $C_1>0$ and $C_2\ge 0$ such that, if we define
    \begin{equation*}
        \psi(x)=\sum_{\lambda_k\le\lambda}a_k\varphi_k(x),\quad x\in\Omega,
    \end{equation*}
    for some coefficients $a_k\in{\mathbb R}$, then
    \begin{equation}\label{eq:Lebeau-Jerison}
        \|\psi\|_{L^2(\Omega)}\le C_1\,e^{C_2\sqrt{\lambda}}\|\psi\|_{L^2(\omega)}.
    \end{equation}

\end{theorem}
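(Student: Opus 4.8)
The plan is to reduce the spectral inequality to a quantitative unique continuation estimate for an elliptic equation in one added dimension. Introduce the cylinder $\mathcal{C}=\Omega\times(-T,T)$ and define
\[
u(x,t)=\sum_{\lambda_k\le\lambda}a_k\cosh(\sqrt{\lambda_k}\,t)\,\varphi_k(x).
\]
A direct computation using $-\div(\gamma\nabla\varphi_k)=\lambda_k\varphi_k$ shows that $u$ solves the augmented elliptic equation $\partial_t^2 u+\div(\gamma\nabla_x u)=0$ in $\mathcal{C}$, vanishes on the lateral boundary $\partial\Omega\times(-T,T)$, and carries the Cauchy data $u(\cdot,0)=\psi$, $\partial_t u(\cdot,0)=0$ on the slice $t=0$. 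The restriction of this Cauchy data to $\omega\times\{0\}$ is exactly $(\psi|_\omega,0)$, which is what ties the construction to the observation term appearing on the right-hand side of \eqref{eq:Lebeau-Jerison}.

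First I would do the norm bookkeeping, where the frequency dependence is generated. By orthonormality of the $\varphi_k$ in $L^2(\Omega)$ and $\cosh\ge 1$, the interior slice norm reproduces $\psi$,
\[
\|u\|_{L^2(\Omega\times(-\delta,\delta))}^2=\sum_{\lambda_k\le\lambda}|a_k|^2\int_{-\delta}^{\delta}\cosh^2(\sqrt{\lambda_k}t)\,dt\ \ge\ 2\delta\,\|\psi\|_{L^2(\Omega)}^2,
\]
while, using $\cosh(\sqrt{\lambda_k}t)\le e^{\sqrt{\lambda}\,T}$ for $\lambda_k\le\lambda$ and $|t|\le T$,
\[
\|u\|_{L^2(\mathcal{C})}^2=\sum_{\lambda_k\le\lambda}|a_k|^2\int_{-T}^{T}\cosh^2(\sqrt{\lambda_k}t)\,dt\ \le\ 2T\,e^{2\sqrt{\lambda}\,T}\|\psi\|_{L^2(\Omega)}^2.
\]
Thus the lower slice norm is comparable to $\|\psi\|_{L^2(\Omega)}$, while the full cylindrical norm costs only a factor $e^{C\sqrt{\lambda}}$; this is precisely the mechanism that injects the square-root-of-the-frequency dependence.

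The core step is a quantitative unique continuation estimate for $u$: for every $\tau\ge 1$,
\[
\|u\|_{L^2(\Omega\times(-\delta,\delta))}\le C\,e^{C\tau}\big(\|\psi\|_{L^2(\omega)}+\|\partial_t u(\cdot,0)\|_{L^2(\omega)}\big)+C\,e^{-\tau/C}\,\|u\|_{L^2(\mathcal{C})},
\]
where the second observation term vanishes because $\partial_t u(\cdot,0)\equiv 0$. I would establish this from a global Carleman estimate for $\partial_t^2+\div(\gamma\nabla_x)$ on $\mathcal{C}$ with a weight adapted to the level-set geometry near $\omega\times\{0\}$, combined with the homogeneous lateral condition $u=0$ on $\partial\Omega\times(-T,T)$, which allows the estimate to be propagated up to and along $\partial\Omega$. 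Since a single Carleman (three-ball) inequality only transfers smallness between neighbouring balls, I would then chain a finite sequence of overlapping balls joining $\omega$ to an arbitrary point of $\Omega$ to upgrade the local estimate to the interior slab $\Omega\times(-\delta,\delta)$; the compounding Hölder exponents and geometric constants are harmless, being absorbed into $C$. I expect this to be the main obstacle: producing the Carleman estimate with the correct boundary geometry near $\partial\Omega$ and tracking the weight parameter so that the observation cost is exactly exponential in $\tau$ rather than worse.

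Combining the three displays closes the argument by absorption. Inserting the norm bounds into the unique continuation estimate gives
\[
\|\psi\|_{L^2(\Omega)}\le C\,e^{C\tau}\|\psi\|_{L^2(\omega)}+C\,e^{-\tau/C}\,e^{\sqrt{\lambda}\,T}\|\psi\|_{L^2(\Omega)}.
\]
Choosing $\tau=M\sqrt{\lambda}$ with $M$ large enough that $C\,e^{-M\sqrt{\lambda}/C}e^{\sqrt{\lambda}\,T}\le\tfrac12$ for all $\lambda\ge\lambda_1$, the last term is absorbed into the left-hand side, yielding $\|\psi\|_{L^2(\Omega)}\le 2C\,e^{CM\sqrt{\lambda}}\|\psi\|_{L^2(\omega)}$, which is \eqref{eq:Lebeau-Jerison} with $C_1=2C$ and $C_2=CM$.
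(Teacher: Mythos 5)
Your proposal follows essentially the same route as the paper's own (sketched) proof: extend $\psi$ to a solution of the augmented elliptic equation $u_{tt}+\div(\gamma\nabla u)=0$ on the cylinder $\Omega\times(-T,T)$ — you take the even $\cosh$ extension with Cauchy data $(\psi,0)$, the paper the odd $\sinh$ extension with data $(0,\psi)$, an immaterial difference — and reduce \eqref{eq:Lebeau-Jerison} to quantified unique continuation from Cauchy data on $\omega\times\{0\}$, the factor $e^{C\sqrt{\lambda}}$ coming from the hyperbolic-cosine growth. Your norm bookkeeping and absorption argument are correct, and, like the paper, which defers entirely to \cite{Lebeau-Jerison}, you leave the analytic core (the Carleman/interpolation estimate) to be supplied by that machinery rather than proving it.
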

The original proof  of \cite{Lebeau-Jerison} follows an approach similar to \cite{Lin1991}, and is based on the fact that the
function $u(x,t)=\frac{1}{\sqrt{\lambda_k}}\sinh(\sqrt{\lambda_k}t)\varphi_k(x)$ is the unique solution of the elliptic equation
$u_{tt}+\div(\gamma\nabla u)=0$ in $\Omega\times(-T,T)$ for $T>0$ with 
homogeneous Dirichlet boundary conditions on $\partial\Omega\times(-T,T)$
and Cauchy data $u(x,0)=0$ and $u_t(x,0)=\psi$ on $\Omega$. So the inequality \eqref{eq:Lebeau-Jerison} is in fact the quantified unique continuation from 
a neighborhood  $\omega\times\{0\}$.\\

The spectral inequality \eqref{eq:Lebeau-Jerison} is somehow a  quantification of the unique continuation of a finite sum of 
eigenfunctions from $\omega$ to the whole domain. The exponential dependency in terms of the highest eigenvalue 
$\sqrt \lambda$ shows that the finite sum of eigenfunctions surprisingly behaves as a solution of an elliptic  PDE. Indeed previously Donnelly 
\cite{Donnelly} in a first result has used the fact that $\psi$ satisfies:
\begin{align*}
\left(\div(\gamma\nabla \cdot) + \lambda_1 \cdot \right)\left(\div(\gamma\nabla \cdot) + \lambda_2 \cdot \right)
\cdots \left(\div(\gamma\nabla \cdot) + \lambda_K \cdot \right) \psi = 0.
\end{align*}
Even if the inequality \eqref{eq:Lebeau-Jerison} is asymptotically optimal (see Theorem \ref{thm:counterexample}), there are still some interesting open problems. One important one is how the constants depend on $\omega$ \cite{Nguyen}. For instance, in the very simple case of a single frequency $\psi=\varphi_k$, for some fixed $k$, it is well known that we can take the constant $C_2=0$ in \eqref{eq:Lebeau-Jerison} provided that $\omega$ satisfies the geometrical control conditions (GCC) (see  \cite{Zuazua} and references therein). Indeed, if we define $u(x,t)=\frac{1}{\sqrt{\lambda_k}}\sin(\sqrt{\lambda_k}t)\varphi_k(x)$, we see that $u$ is the unique solution of the wave equation:
\begin{eqnarray*}
    u_{tt}-\div(\gamma\nabla u)&=&0\text{ in }\Omega\times(0,T)\\
    u(x,0)&=&0\text{ in }\Omega\\
    u_t(x,0)&=&\varphi_k\text{ in }\Omega\\
    u&=&0\text{ on }\partial\Omega\times(0,T).
\end{eqnarray*}
From the internal observability inequality for the wave equation with  $\omega$, we have that there 
exists a observability time $T_0=T_0(\omega,\Omega, \gamma, d)>0$, and a 
constant $C_0=C_0(T,\omega,\Omega, \gamma, d)>0$ such that for all $T\ge T_0$:
\begin{equation}
    \|(u(\cdot,0),u_t(\cdot,0))\|_{H^1(\Omega)\times L^2(\Omega)}\le C_0
    \|u_t\|_{L^2(\omega\times(0,T))},
\end{equation}
and since $u_t=\cos(\sqrt{\lambda_k}t)\varphi_k$ we easily obtain
for $\widetilde C_0=C_0\sqrt{T_0}$ that
\begin{equation}
    \|\varphi_k\|_{L^2(\Omega)}\le \widetilde C_0\|\varphi_k\|_{L^2(\omega)}.
\end{equation}
This shows that in contrast with sums of two and more eigenfunctions a single 
eigenfunction behaves as a  solution of a PDE of hyperbolic type. Moreover when
the geometrical control conditions (GCC) are not fulfilled and  a
weak observability controllability holds,  the optimal spectral inequality for one 
eigenfunction can be derived following the approach developed recently 
in \cite{Ammari-Triki}.\\

The other open problem is that the bound in $\sqrt{\lambda}$ in \eqref{eq:Lebeau-Jerison} is uniform for all the possible finite linear combinations. The objective of this paper is to establish an improved  non-uniform 
bound that depends on the frequency of the linear combination \cite{Ammari-Triki}. In fact the 
inequality \eqref{eq:Lebeau-Jerison} seems to be not suited for example for the sum $\phi_1+2^{-\lambda_K} \phi_K$ with a frequency that
tends to $\lambda_1$ for large $ \lambda=\lambda_K$. This will not be in contradiction with the 
spectral inequality \eqref{eq:Lebeau-Jerison} which is in fact asymptotically optimal for large $ \lambda$. Indeed, there is the following counterexample due to Lebeau and Jerison \cite{Lebeau-Jerison}. 
For the sake of completeness, we recall here the proof.
\begin{theorem}[\cite{Lebeau-Jerison}]\label{thm:counterexample} 
    Given an nonempty open set $\omega\subset\Omega$, $\omega\neq\Omega$, there
    exists $C>0$, such that for all $\lambda\ge \lambda_1$ there exist $a_k\in{\mathbb R}$ such that
    \begin{equation*}
        \psi(x)=\sum_{\lambda_k\le\lambda}a_k\varphi_k(x),\quad x\in\Omega,
    \end{equation*}
    satisfies
    \begin{equation}
        \|\psi\|_{L^2(\Omega)}\ge Ce^{C\sqrt{\lambda}}\|\psi\|_{L^2(\omega)}.
    \end{equation}
\end{theorem}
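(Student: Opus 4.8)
The plan is to construct an explicit finite linear combination that concentrates its mass away from $\omega$, exploiting the semiclassical localization of eigenfunctions. Since the inequality to be violated involves a ratio $\|\psi\|_{L^2(\Omega)}/\|\psi\|_{L^2(\omega)}$, the goal is to build $\psi$ whose $L^2(\omega)$-norm is exponentially small in $\sqrt{\lambda}$ while its global $L^2(\Omega)$-norm stays of order one. The natural device, and the one I would reach for following the elliptic reformulation already sketched in the excerpt, is to pass through the parabolic/elliptic extension $u(x,t)=\sum_{\lambda_k\le\lambda}\frac{a_k}{\sqrt{\lambda_k}}\sinh(\sqrt{\lambda_k}\,t)\,\varphi_k(x)$, which solves $u_{tt}+\mathrm{div}(\gamma\nabla u)=0$ with zero Cauchy data and $u_t(\cdot,0)=\psi$. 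Violating the spectral inequality is equivalent to exhibiting Cauchy data for this degenerate elliptic problem whose solution fails the three-balls/propagation-of-smallness estimate at the expected rate.

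The cleanest route I would take is to reduce to a one-dimensional model where everything is explicit, then transplant. Concretely, pick a point $x_0\in\Omega\setminus\overline{\omega}$ and a small ball around it, and work with the heat-kernel-type object: set $a_k$ proportional to $\varphi_k(x_0)\,e^{-\lambda_k/\mu}$ for a parameter $\mu$ tuned to $\lambda$, so that $\psi(x)=\sum_{\lambda_k\le\lambda}\varphi_k(x_0)\varphi_k(x)e^{-\lambda_k/\mu}$ is a truncation of the heat kernel $p_{1/\mu}(x_0,x)$ at time $t=1/\mu$. For the model problem $\gamma\equiv 1$ the full heat kernel is a Gaussian $\sim e^{-|x-x_0|^2\mu/4}$, sharply peaked at $x_0$; its $L^2(\omega)$-mass is therefore $O(e^{-c\,\mu})$ relative to its $L^2$ mass near $x_0$, since $\omega$ stays a fixed positive distance $\delta=\mathrm{dist}(x_0,\omega)>0$ away. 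Choosing $\mu\asymp\sqrt{\lambda}$ then yields exactly the ratio $e^{c\sqrt{\lambda}}$ demanded, \emph{provided} the spectral truncation at $\lambda_k\le\lambda$ does not destroy this concentration. I would therefore first establish the statement for the torus or a rectangle with $\gamma\equiv 1$ by direct Fourier computation, where the eigenfunctions are trigonometric and the Gaussian concentration is transparent.

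The main technical obstacle is controlling the error introduced by truncating the heat kernel at the finite eigenvalue cutoff $\lambda$. The discarded tail $\sum_{\lambda_k>\lambda}\varphi_k(x_0)\varphi_k(x)e^{-\lambda_k/\mu}$ must be shown to be small compared to the retained Gaussian peak, which forces the relation between $\mu$ and $\lambda$: one needs $e^{-\lambda/\mu}$ to be negligible against the target ratio $e^{-c\sqrt{\lambda}}$, i.e. $\lambda/\mu\gg\sqrt{\lambda}$, compatible with $\mu\asymp\sqrt{\lambda}$ only if one checks the constants carefully. I would control this tail using the standard Gaussian upper bounds for heat kernels of uniformly elliptic operators (Aronson-type estimates), which hold under the $C^2$ regularity of $\gamma$ and $\gamma\ge\gamma_0>0$ assumed in the paper; these give $|p_t(x_0,x)|\le C t^{-d/2}e^{-c|x-x_0|^2/t}$ and thereby bound both the mass on $\omega$ from above and the tail from above, while a matching lower bound near $x_0$ (also Aronson) secures $\|\psi\|_{L^2(\Omega)}$ from below.

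Finally, to promote the flat model to the general domain $\Omega$ with variable $\gamma$, I would invoke these two-sided Gaussian heat-kernel bounds directly rather than re-deriving concentration by hand: they replace the exact Gaussian by the quasi-distance $d_\gamma(x_0,x)^2$ induced by the Riemannian metric $\gamma^{-1}$, but the qualitative conclusion is identical because $d_\gamma(x_0,\omega)$ is still a fixed positive number. Assembling the pieces, I would (i) fix $x_0\in\Omega\setminus\overline\omega$ and $\delta=d_\gamma(x_0,\omega)>0$; (ii) set $\mu=\kappa\sqrt\lambda$ for a small constant $\kappa$ to be fixed and define $\psi$ as the $\lambda$-truncated heat kernel at time $1/\mu$ evaluated along $x_0$; (iii) bound $\|\psi\|_{L^2(\omega)}\le Ce^{-c\delta^2\mu}$ and the truncation error by $Ce^{-\lambda/\mu}$, both from Aronson upper bounds; (iv) bound $\|\psi\|_{L^2(\Omega)}\ge c\,\mu^{d/4}$ from the Aronson lower bound near $x_0$ minus the same truncation error; and (v) choose $\kappa$ so that $\lambda/\mu=\delta^{-2}\cdot c\delta^2\mu$ dominates, producing the ratio $\|\psi\|_{L^2(\Omega)}/\|\psi\|_{L^2(\omega)}\ge Ce^{C\sqrt\lambda}$ with $C$ depending only on $\delta$, $\gamma_0$, and $d$. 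The delicate point throughout is the bookkeeping of constants in step (v), ensuring the single parameter $\mu$ simultaneously makes the concentration strong enough and the truncation harmless.
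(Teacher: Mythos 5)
Your proposal follows essentially the same approach as the paper: both take $\psi$ to be the spectral truncation at level $\lambda$ of the Dirichlet heat kernel centered at a point of $\Omega\setminus\overline\omega$ evaluated at time $\asymp 1/\sqrt{\lambda}$ (i.e.\ $a_k=\varphi_k(y_0)e^{-\lambda_k/\sqrt{\lambda}}$ up to your parameter $\kappa$), use a Gaussian upper bound (the paper via the maximum-principle comparison with the free-space kernel, you via Aronson estimates) to make $\|\psi\|_{L^2(\omega)}$ exponentially small, and separately control the spectral truncation tail. The only notable difference is your lower bound on $\|\psi\|_{L^2(\Omega)}$ through Aronson-type lower bounds near $y_0$, where the paper gets the same conclusion trivially from orthogonality, $\|\psi\|_{L^2(\Omega)}^2=\sum_{\lambda_k\le\lambda}a_k^2\ge a_1^2\ge e^{-2\sqrt{\lambda_1}}|\varphi_1(y_0)|^2>0$, avoiding heat-kernel lower-bound machinery altogether.
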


\begin{proof} Let $y_0\in\Omega\setminus\overline\omega$ be fixed, and let
$p_{y_0}(x,t)=\sum_{k=1}^\infty\ e^{-\lambda_k t}\varphi_k(x)\varphi_k(y_0)$ be the kernel in $\Omega$ of the heat equation $p_t-\div(\gamma\nabla p)=0$ with homogeneous Dirichlet boundary conditions and with an initial condition the Dirac mass at the point $y_0$. By the Maximum principle $p_{y_0}\le P_{y_0}$ where $P_{y_0}(x,t)=\frac{1}{(4\pi t)^{d/2}}\exp\left(-\frac{|x-y_0|^2}{4t}\right)$ is the corresponding kernel of the heat equation in the whole space ${\mathbb R}^d$,
and therefore
\begin{equation*}
p_{y_0}(x,t)\le c_1\exp\left(-\frac{c_2}{t}\right) \text{ for all }x\in\omega,\; t>0,
\end{equation*}
where $c_1=c/d_0^d$, $c_2=d_0^2/8$, $ c= \|r^{-d/2}\exp(-r/8)\|_{L^\infty(\mathbb R_+)}>0$,  and 
$d_0>0$ is the distance from $y_0$ to $\omega$. 
If we choose
\begin{equation}\label{eq:ak}
\psi=\sum_{\lambda_k\le\lambda} a_k\varphi_k,\quad\text{with}\quad a_k=\exp\left(-\frac{\lambda_k}{\sqrt{\lambda}}\right)\varphi_k(y_0)
\end{equation}
then
\begin{eqnarray}\label{eq:inequality3}
    \|\psi\|_{L^2(\omega)}^2&\le& 2\left\|p_{y_0}\left(\frac{1}{\sqrt{\lambda}},\cdot\right)-\psi\right\|_{L^2(\omega)}^2+2\left\|p_{y_0}\left(\frac{1}{\sqrt{\lambda}},\cdot\right)\right\|_{L^2(\omega)}^2\cr
    &\le&2\sum_{\lambda_k>\lambda}a_k^2+2c_1\,e^{-2c_2\sqrt{\lambda}}.
\end{eqnarray}
On other hand we have \cite{davies1989heat}
\begin{equation} \label{upperboundeigenfunctions}
\varphi_k(x) \leq c_3 (\sqrt\lambda_k)^{d/2+1},  \quad \forall x\in \Omega.  
\end{equation}
for some positive constant $c_3$ depending only on $\Omega$.\\

For $\lambda_k>\lambda$ and from \eqref{eq:ak} and \eqref{upperboundeigenfunctions} we have that 
\begin{equation*}
\sum_{\lambda_k>\lambda} a_k^2\le \,c_3 e^{-\sqrt{\lambda}}
\sum_{\lambda_k>\lambda} e^{-\sqrt{\lambda}_k}\lambda_k^{\frac{d}{2}+1}\le c_4\,e^{-c_5\sqrt{\lambda}}.
\end{equation*}
for some positive constant $c_4$ and then
from \eqref{eq:inequality3}
\begin{equation*}
    \|\psi\|_{L^2(\omega)}\le c_5e^{-c_6\sqrt{\lambda}},
\end{equation*}
for some constants $c_5>0$ and $c_6>0$. Finally
\[
\|\psi\|_{L^2(\Omega)}^2=\sum_{\lambda_k\le\lambda}a_k^2\ge e^{-2\sqrt{\lambda_1}}|\varphi_1(y_0)|^2>0,
\] 
and hence
\begin{equation}
\|\psi\|_{L^2(\omega)}\le \frac{c_5}{|\varphi_1(y_0)|}e^{\sqrt{\lambda_1}}
e^{-c_6\sqrt{\lambda}}\|\psi\|_{L^2(\Omega)}.
\end{equation}
\end{proof}

\section{Main results}
We will introduce the frequency numbers for a linear 
combination of eigenfunctions given by its coefficients $a_k$. 
This is the main definition to obtain a spectral inequality 
with a non-uniform exponential bound that depends on the
coefficients of the linear combination.

\begin{definition}\label{def:frequencynumber} 
Given $\tau>0$ and $n\in{\mathbb N}$, $n\ge 1$,
we define the frequency number of order $n$ associated to the coefficients $
a_k\in{\mathbb R}$ for the indices $k$ such that $\lambda_k\le\lambda$ by
\begin{equation}
\Lambda_n(\tau)=\frac{\displaystyle\sum_{\lambda_k\le \lambda}\lambda_k^n\,a_k^2\exp(2\tau\lambda_k)}{\displaystyle\sum_{\lambda_k\le \lambda}\lambda_k^{n-1} a_k^2\exp(2\tau\lambda_k)}.
\end{equation}
\end{definition}

\begin{proposition}\label{prop:frequencynumber} 
Given coefficients $a_k\in{\mathbb R}$ for $\lambda_k\le\lambda$ then the associated spectral numbers $\Lambda_n(\tau)$ form a non-decreasing sequence in $n$ of non-decreasing functions of $\tau>0$, such that
\begin{equation}\label{spectralnumbers}
0<\lambda_1<\Lambda_1(\tau)\le\Lambda_2(\tau)\le\ldots\le\lambda.
\end{equation}
\end{proposition}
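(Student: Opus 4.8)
The plan is to recast each $\Lambda_n(\tau)$ as a ratio of weighted power sums and to reduce both monotonicity statements to a single Cauchy--Schwarz inequality. Fixing $\tau>0$, I would set for every integer $j\ge 0$
\begin{equation*}
M_j(\tau)=\sum_{\lambda_k\le\lambda}\lambda_k^{\,j}\,a_k^2\exp(2\tau\lambda_k),
\end{equation*}
so that Definition~\ref{def:frequencynumber} reads simply $\Lambda_n(\tau)=M_n(\tau)/M_{n-1}(\tau)$. All summands are nonnegative and, for any nontrivial combination (some $a_k\neq 0$), each $M_j(\tau)>0$ because $\lambda_k>0$; hence the ratio is well defined. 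Reading $\Lambda_n(\tau)=\sum_k\lambda_k\,p_k$ as the average of the eigenvalues against the probability weights $p_k=\lambda_k^{\,n-1}a_k^2\exp(2\tau\lambda_k)/M_{n-1}(\tau)$ gives at once the bounds $\lambda_1\le\Lambda_n(\tau)\le\lambda$; moreover this average exceeds $\lambda_1$ strictly as soon as some $a_k$ with $\lambda_k>\lambda_1$ is nonzero, which produces $\lambda_1<\Lambda_1(\tau)$.

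For the monotonicity in $n$ I would compute
\begin{equation*}
\Lambda_{n+1}(\tau)-\Lambda_n(\tau)=\frac{M_{n+1}}{M_n}-\frac{M_n}{M_{n-1}}=\frac{M_{n-1}M_{n+1}-M_n^2}{M_nM_{n-1}},
\end{equation*}
so that $\Lambda_n\le\Lambda_{n+1}$ is equivalent to $M_n^2\le M_{n-1}M_{n+1}$. This last inequality is exactly Cauchy--Schwarz applied to the vectors with components $\lambda_k^{(n-1)/2}a_k\exp(\tau\lambda_k)$ and $\lambda_k^{(n+1)/2}a_k\exp(\tau\lambda_k)$; equivalently, it expresses the log-convexity of the moment sequence $j\mapsto M_j(\tau)$.

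For the monotonicity in $\tau$ I would differentiate. Since $\partial_\tau M_j(\tau)=2M_{j+1}(\tau)$, the quotient rule yields
\begin{equation*}
\frac{d}{d\tau}\Lambda_n(\tau)=\frac{2M_{n+1}M_{n-1}-2M_n^2}{M_{n-1}^2}\ge 0,
\end{equation*}
by the very same inequality $M_n^2\le M_{n-1}M_{n+1}$; in probabilistic language the right-hand side is $2\,\mathrm{Var}(\lambda)\ge 0$ for the weighted average defining $\Lambda_n$, so each $\Lambda_n(\cdot)$ is nondecreasing. Together with the previous paragraph this closes the whole chain \eqref{spectralnumbers}.

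Once the moment reformulation is in place every computation is routine, so the only genuinely delicate point is the strict inequality $\lambda_1<\Lambda_1(\tau)$: it forces me to record the standing nondegeneracy hypothesis that the combination is not supported on the lowest eigenvalue alone, for otherwise $\Lambda_n\equiv\lambda_1$ and all the inequalities degenerate to equalities. I would state that assumption explicitly at the outset, after which each step above follows unconditionally from Cauchy--Schwarz.
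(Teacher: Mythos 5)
Your proof is correct, but it is organized along a genuinely different route than the paper's. The paper proves monotonicity in $\tau$ first: it differentiates $\Lambda_n$, recognizes the resulting expression as a variance of the eigenvalues under suitable probability weights (Jensen's inequality), and then deduces monotonicity in $n$ as a corollary of the identity $\Lambda_n'(\tau)=2\Lambda_n(\tau)\bigl(\Lambda_{n+1}(\tau)-\Lambda_n(\tau)\bigr)$, so that $\Lambda_{n+1}\ge\Lambda_n$ follows from $\Lambda_n'\ge 0$. You instead isolate the single moment inequality $M_n^2\le M_{n-1}M_{n+1}$ (Cauchy--Schwarz, i.e.\ log-convexity of $j\mapsto M_j(\tau)$) and derive \emph{both} monotonicities from it independently: the $n$-monotonicity by direct comparison of the two quotients, and the $\tau$-monotonicity via $\partial_\tau M_j=2M_{j+1}$ and the quotient rule. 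The two arguments rest on the same underlying fact --- Jensen's variance bound and your Cauchy--Schwarz inequality are the same statement in different clothing --- but your organization is more economical and self-contained, while the paper's yields the recursion relation as a by-product, which is of independent interest (it is what connects $\Lambda_1$ to the log-convexity/frequency function of the heat flow discussed in the paper's subsequent remark). One further point in your favor: you correctly flag that the strict inequality $\lambda_1<\Lambda_1(\tau)$ asserted in \eqref{spectralnumbers} requires a nondegeneracy hypothesis (some $a_k\neq 0$ with $\lambda_k>\lambda_1$), since a combination supported on the lowest eigenvalue gives $\Lambda_n\equiv\lambda_1$; the paper's own proof silently establishes only the non-strict bound $\lambda_1\le\Lambda_n(\tau)$, so your explicit treatment of this point is an improvement, not a gap.
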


\begin{proof} Given a fixed collection of coefficients 
$a_k\in{\mathbb R}$ for $\lambda_k\le\lambda$, and $n\in{\mathbb N}$ with 
$n\ge 1$, on one hand, by \eqref{eigenvalues}, it is direct to see that 
\begin{equation}
0<\lambda_1\le \Lambda_n(\tau)\le\lambda.
\end{equation}
On the other hand, if we take the derivative with respect to $\tau>0$, it is easy to check that (all the sums are over $\lambda_k\le\lambda$)
\begin{eqnarray*}
\Lambda_n'(\tau)&=&2\frac{\left(\sum \lambda_k^{n-1}\,a_k^2 e^{2\tau\lambda_k}\right)\left(\sum \lambda_k^{n+1}\,a_k^2\,e^{2\tau\lambda_k}\right)
-\left(\sum \lambda_k^n\,a_k^2\,e^{2\tau\lambda_k}\right)^2}{\left(\sum \lambda_k^{n-1}\,a_k^2\,e^{2\tau\lambda_k}\right)^2}\\
&=&2(\Lambda_n(\tau)\Lambda_{n+1}(\tau)-\Lambda_n^2(\tau)),
\end{eqnarray*}
and then we have the relationship
\begin{equation}\label{recursive-Lambda}
\Lambda_n'(\tau)=2\Lambda_n(\tau)(\Lambda_{n+1}(\tau)-\Lambda_n(\tau)).
\end{equation}
Notice that
\begin{eqnarray*}
\Lambda_n'(\tau)&=&2\left(
\frac{\sum \lambda_k^2 \lambda_k^{n-1}\,a_k^2\,e^{2\tau\lambda_k}}
{\sum \lambda_k^{n-1}\,a_k^2\,e^{2\tau\lambda_k}}-
\left(\frac{\sum \lambda_k\lambda^{n-1}\,a_k^2\,e^{2\tau\lambda_k}}{\sum \lambda_k^{n-1}\,a_k^2\,e^{2\tau\lambda_k}}\right)^2\right)
\\
&=&2\left(\sum \alpha_k\lambda_k^2-\left(\sum\alpha_k\lambda_k\right)^2\right),\quad
\alpha_k=\frac{\sum \lambda_k^{n-1}a_k^2\,e^{2\tau\lambda_k}}{\sum a_k^2\,e^{2\tau\lambda_k}},
\end{eqnarray*}
and from Jensen's inequality, since the function $x\rightarrow x^2$ is convex
and $\sum\alpha_k=1$, $\alpha_k\ge 0$, we deduce that
\begin{equation}
\Lambda_n'(\tau)\ge 0,
\end{equation}
from which we conclude that $\Lambda_n$ is a non-decreasing function of $\tau$ and thanks to \eqref{recursive-Lambda} for all $\tau>0$
\begin{equation}
\Lambda_{n+1}(\tau)\ge\Lambda_n(\tau).
\end{equation}

\end{proof}

\begin{remark}
There is an alternative way to prove that $\Lambda_1$ is an increasing function of $\tau$, see \cite{Lin2003}. Indeed, using that $u(x,t)=\sum a_k e^{(\tau-t)\lambda_k}\phi_k(x)$ is solution of the heat equation $u_t-\div(\gamma\nabla u)=0$ with homogeneous Dirichlet boundary conditions in $\Omega\times(0,\tau)$ and initial condition $u_0(x)=\sum a_k e^{\tau\lambda_k}\phi_k(x)$, it is possible to show that the function 
\[
N(t)=\frac{\|\nabla u(\cdot,t)\|_{L^2(\Omega)}^2}{\|u(\cdot,t)\|_{L^2(\Omega)^2}},
\]
is non-increasing in $t$ which is equivalent to say that $\Lambda_1(\tau)=N(0)$ is non-decreasing in $\tau$. This property also characterizes
the log-convexity of the heat equation.
\end{remark}

The following is the main result of the paper.
\begin{theorem}\label{thm:Lambda} 
For all non-empty open subset $\omega\subset\Omega$, 
there exist constants $C_i=C_i(\Omega,\omega, \gamma, d) >0, \; i=1,2,3, $ such that 
for all linear combinations 
\begin{equation}
\psi(x) = \sum_{\lambda_k\le\lambda} a_k\varphi_k(x),\quad\text{for all } x\in\Omega,
\end{equation}
there exists $\tau_0\in (0, \frac{C_3}{\sqrt{\lambda_1}})$ such that
\begin{equation}\label{Lambda-ineq1}
\|\psi\|_{L^2(\Omega)}\le C_1\,e^{C_2\sqrt{\Lambda_1(\tau_0)}}
\|\psi\|_{L^2(\omega)}.
\end{equation}
Moreover, for all $s\ge 0$ there exists $\tau_s \in (0, \frac{C_3}{\sqrt{\lambda_1}})$ such that
\begin{equation}\label{Lambda-ineq2}
|\psi|_{H^s(\Omega)}\le C_1\,e^{C_2\sqrt{\Lambda_{1+s}(\tau_s)}}
|\psi|_{H^s(\omega)},
\end{equation}
where $|\psi|_{H^s(A)}=\|\sum_{\lambda_k\le\lambda} \lambda_k^{\frac{s}{2}}\,a_k\varphi_k\|_{L^2(A)}$ for $A\subseteq \Omega.$
\end{theorem}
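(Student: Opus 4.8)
The plan is to derive \eqref{Lambda-ineq1} from the Lebeau--Jerison inequality \eqref{eq:Lebeau-Jerison} by combining a frequency cut-off with the monotonicity of $\Lambda_1$ from Proposition \ref{prop:frequencynumber}, and then to obtain \eqref{Lambda-ineq2} by applying \eqref{Lambda-ineq1} to an auxiliary combination. Throughout I write $f(\tau)=\sum_{\lambda_k\le\lambda}a_k^2 e^{2\tau\lambda_k}$ (the squared $L^2(\Omega)$-norm of the time-$\tau$ backward heat evolution of $\psi$, cf.\ the Remark), so that $f(0)=\|\psi\|_{L^2(\Omega)}^2$ and, by Definition \ref{def:frequencynumber}, $f'(\tau)=2\Lambda_1(\tau)f(\tau)$. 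Since $\Lambda_1$ is non-decreasing in $\tau$, integrating $\frac{d}{d\tau}\log f=2\Lambda_1$ over $(0,\tau)$ and using $\int_0^\tau\Lambda_1(s)\,ds\le\tau\Lambda_1(\tau)$ gives the a priori bound
\begin{equation*}
f(\tau)\le f(0)\,e^{2\tau\Lambda_1(\tau)}=\|\psi\|_{L^2(\Omega)}^2\,e^{2\tau\Lambda_1(\tau)}.
\end{equation*}

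Next I would split $\psi=\psi_{\mathrm{low}}+\psi_{\mathrm{high}}$ at the threshold $\mu=2\Lambda_1(\tau)$, with $\psi_{\mathrm{low}}=\sum_{\lambda_k\le\mu}a_k\varphi_k$ and $\psi_{\mathrm{high}}=\sum_{\mu<\lambda_k\le\lambda}a_k\varphi_k$. Applying \eqref{eq:Lebeau-Jerison} to $\psi_{\mathrm{low}}$ (whose top eigenvalue is at most $\mu$), together with the triangle inequality and $\|\cdot\|_{L^2(\omega)}\le\|\cdot\|_{L^2(\Omega)}$, yields
\begin{equation*}
\|\psi\|_{L^2(\Omega)}\le C_1 e^{C_2\sqrt{2\Lambda_1(\tau)}}\|\psi\|_{L^2(\omega)}+\bigl(1+C_1 e^{C_2\sqrt{2\Lambda_1(\tau)}}\bigr)\|\psi_{\mathrm{high}}\|_{L^2(\Omega)}.
\end{equation*}
The tail is controlled by the exponential weight: for $\lambda_k>\mu$ one has $a_k^2\le e^{-2\tau\mu}a_k^2 e^{2\tau\lambda_k}$, so that $\|\psi_{\mathrm{high}}\|_{L^2(\Omega)}^2\le e^{-2\tau\mu}f(\tau)\le\|\psi\|_{L^2(\Omega)}^2 e^{-2\tau\Lambda_1(\tau)}$ by the choice $\mu=2\Lambda_1(\tau)$ and the a priori bound above.

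It then remains to choose $\tau$ so that the remainder is absorbed. Bounding $1+C_1 e^{C_2\sqrt{2\Lambda_1}}\le(1+C_1)e^{C_2\sqrt{2\Lambda_1}}$, the absorption $(1+C_1 e^{C_2\sqrt{2\Lambda_1(\tau)}})e^{-\tau\Lambda_1(\tau)}\le\tfrac12$ holds whenever $\tau\Lambda_1(\tau)-\sqrt2\,C_2\sqrt{\Lambda_1(\tau)}\ge\log(2(1+C_1))$, which in turn follows as soon as $\tau\sqrt{\Lambda_1(\tau)}\ge C:=\sqrt2\,C_2+\log(2(1+C_1))/\sqrt{\lambda_1}$, using $\Lambda_1(\tau)\ge\lambda_1$; here $C$ depends only on $\Omega,\omega,\gamma,d$ through $C_1,C_2,\lambda_1$. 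Since $g(\tau):=\tau\sqrt{\Lambda_1(\tau)}$ is continuous and strictly increasing with $g(0^+)=0$ and, by the strict bound $\Lambda_1>\lambda_1$ of Proposition \ref{prop:frequencynumber}, $g(C/\sqrt{\lambda_1})>C$, the intermediate value theorem provides $\tau_0\in(0,C/\sqrt{\lambda_1})$ with $g(\tau_0)=C$. Setting $C_3:=C$ and relabelling $C_1\to 2C_1$, $C_2\to\sqrt2\,C_2$ gives \eqref{Lambda-ineq1}. This simultaneous calibration of $\mu$ and $\tau$ is the delicate point: the tail must be made small enough to absorb, which forces $\tau\mu$ large, while the Lebeau--Jerison exponent $\sqrt\mu$ must stay comparable to $\sqrt{\Lambda_1(\tau)}$ and $\tau$ must remain below $C_3/\sqrt{\lambda_1}$; the choice $\mu\sim\Lambda_1(\tau)$ reconciles these because $\tau\Lambda_1(\tau)=(\tau\sqrt{\Lambda_1(\tau)})\sqrt{\Lambda_1(\tau)}$ scales exactly like the exponent $\sqrt{\Lambda_1(\tau)}$.

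Finally, for \eqref{Lambda-ineq2} I would apply \eqref{Lambda-ineq1} to the auxiliary combination $\widetilde\psi=\sum_{\lambda_k\le\lambda}\lambda_k^{s/2}a_k\varphi_k$, for which $\|\widetilde\psi\|_{L^2(A)}=|\psi|_{H^s(A)}$ for every $A\subseteq\Omega$. The order-one frequency number associated to the coefficients $b_k=\lambda_k^{s/2}a_k$ is, by a direct computation with the formula of Definition \ref{def:frequencynumber}, exactly $\Lambda_{1+s}(\tau)$ of the original coefficients, and the absorption threshold $C$ is unchanged, so the time $\tau_s$ produced by the intermediate value argument again lies in $(0,C_3/\sqrt{\lambda_1})$; this last step is routine once \eqref{Lambda-ineq1} is established.
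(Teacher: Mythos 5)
Your proof is correct, but it takes a genuinely different route from the paper's. The paper never invokes the Lebeau--Jerison inequality in its argument: it runs the backward heat flow $u(x,t)=\sum\lambda_k^{s/2}a_k e^{(\tau-t)\lambda_k}\varphi_k(x)$, feeds it into the H\"older-type observability estimate of Theorem 4.1 of \cite{bardos2017observation},
\begin{equation*}
\|u(\cdot,\tau)\|_{L^2(\Omega)}\le e^{C\left(1+\frac{1}{\tau}\right)}\|u(\cdot,\tau)\|_{L^2(\omega)}^{\theta}\,\|u(\cdot,0)\|_{L^2(\Omega)}^{1-\theta},
\end{equation*}
rewrites the last factor through the identity $\phi'(\tau)=2\Lambda_\eta(\tau)\phi(\tau)$ for $\phi(\tau)=\sum\lambda_k^{\eta-1}a_k^2e^{2\tau\lambda_k}$, and then optimizes the exponent $\frac{c_1}{\tau}+c_2\int_0^\tau\Lambda_\eta(r)\,dr$ over $\tau$; the optimal time solves $\tau_s\sqrt{\Lambda_\eta(\tau_s)}=\sqrt{c_1/c_2}$ and all $s\ge 0$ are handled at once by taking $\eta=1+s$. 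You instead use Theorem \ref{thm:Lebeau-Jerison} as a black box, cut the spectrum at $\mu=2\Lambda_1(\tau)$, control the tail with the exponential weight and the a priori bound $f(\tau)\le f(0)e^{2\tau\Lambda_1(\tau)}$, and absorb. What your route buys: it exhibits the passage from $\sqrt{\lambda}$ to $\sqrt{\Lambda_1(\tau_0)}$ as a formal self-improvement of \emph{any} uniform spectral inequality, requiring no heat-equation observability input, and your reduction of \eqref{Lambda-ineq2} to \eqref{Lambda-ineq1} via the coefficients $b_k=\lambda_k^{s/2}a_k$ (whose order-one frequency number is exactly $\Lambda_{1+s}$) is slicker than re-running the whole computation. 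What the paper's route buys: it does not presuppose Lebeau--Jerison (Remark \ref{rem:both} in fact recovers it with the same constants), and its constants come straight from the heat estimate, whereas yours pick up the factor $\sqrt{2}$ in $C_2$ and a $\log(2(1+C_1))$ in the threshold. It is worth noting that both proofs end up selecting the time through the same crossing relation $\tau\sqrt{\Lambda(\tau)}=\mathrm{const}$.

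One small repair is needed in your intermediate value step: you use the strict inequality $\Lambda_1(\tau)>\lambda_1$ from Proposition \ref{prop:frequencynumber}. That strict bound is indeed what the proposition \emph{states}, but its proof only establishes $\Lambda_1(\tau)\ge\lambda_1$, and equality genuinely occurs when $\psi$ is supported in the first eigenspace; in that case $g(\tau)=\tau\sqrt{\Lambda_1(\tau)}$ reaches the value $C$ only at the endpoint $\tau=C/\sqrt{\lambda_1}$, which your choice $C_3=C$ excludes from the open interval. The fix is one line: drop the intermediate value theorem altogether, take $\tau_0=C/\sqrt{\lambda_1}$, which always satisfies $\tau_0\sqrt{\Lambda_1(\tau_0)}\ge C$ because $\Lambda_1\ge\lambda_1$, and set $C_3=2C$ so that $\tau_0\in(0,C_3/\sqrt{\lambda_1})$.
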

\begin{proof} First notice that the previous calculations for $n\in{\mathbb N}$ can be easily extended replacing $n$ by a real number $\eta\ge 1$. For the heat equation for any $\tau>0$
\begin{eqnarray}
u_t-\div(\gamma\nabla u)&=&0 \quad\text{  in  }\Omega\times(0,\tau),\\
u&=&0 \quad \text{  on  }\partial\Omega\times(0,\tau),
\end{eqnarray}
and given $a_k\in{\mathbb R}$ and $\eta\ge 1$, 
let us take as initial condition
\begin{equation}
u(x,0)=\sum_{\lambda_k\le\lambda}\lambda_k^{\frac{\eta-1}{2}}
a_k\,e^{\tau\lambda_k}\varphi_k(x),
\end{equation}
so the unique solution is given by
\begin{equation}
u(x,t)=\sum_{\lambda_k\le\lambda}\lambda_k^{\frac{\eta-1}{2}}a_k\,e^{(\tau-t)\lambda_k}\varphi_k(x).
\end{equation}
We know from Theorem 4.1 in \cite{bardos2017observation} that given $\omega\subset\Omega$ there exist $C=C(\Omega,\omega, \gamma, d)>0$ and $\theta=\theta(\Omega,\omega, \gamma, d)\in(0,1)$ such that for all $\tau>0$
\begin{equation}
\|u(\cdot,\tau)\|_{L^2(\Omega)}\le e^{C\left(1+\frac{1}{\tau}\right)}
\|u(\cdot,\tau)\|_{L^2(\omega)}^\theta \|u(\cdot,0)\|_{L^2(\Omega)}^{1-\theta}.
\end{equation}
Therefore
\begin{equation*}
\left\|\sum \lambda_k^{\frac{\eta-1}{2}}
a_k\varphi_k \right\|_{L^2(\Omega)}\le e^{C\left(1+\frac{1}{\tau}\right)}
\left\|\sum \lambda_k^{\frac{\eta-1}{2}}a_k\varphi_k \right\|_{L^2(\omega)}^\theta 
\left(\sum \lambda_k^{\eta-1}a_k^2\,e^{2\tau\lambda_k}\right)^{\frac{1-\theta}{2}},
\end{equation*}
that is for $s=\eta-1$
\begin{equation}\label{eq:inequality1}
|\psi|_{H^s(\Omega)}\le e^{C\left(1+\frac{1}{\tau}\right)}
|\psi|_{H^s(\omega)}^\theta \left(\phi(\tau)\right)^{\frac{1-\theta}{2}}.
\end{equation}
where
\begin{equation}    \phi(\tau)=\sum \lambda_k^{\eta-1}a_k^2\,e^{2\tau\lambda_k}.
\end{equation}
Now
\begin{eqnarray*}
    \phi'(\tau)&=&2\sum \lambda_k^\eta a_k^2\,e^{2\tau\lambda_k}\\ 
    &=&2\frac{\sum \lambda_k^\eta a_k^2\,e^{2\tau\lambda_k}}{\phi(\tau)}\phi(\tau)\\ 
    &=&2\Lambda_\eta(\tau)\phi(\tau),
\end{eqnarray*}
where $\Lambda_\eta(\tau)$ was introduced in Definition \ref{def:frequencynumber}. Therefore
\begin{equation}
\phi(\tau)=\phi(0)e^{2\int_0^\tau \Lambda_\eta(r)dr},\quad
\phi(0)=|\psi|_{H^s(\Omega)}^2.
\end{equation}
By replacing in \eqref{eq:inequality1} gives
\begin{equation}\label{eq:inequality2}
|\psi|_{H^s(\Omega)}\le e^{c_1}
|\psi|_{H^s(\omega)}e^{\frac{c_1}{\tau}+c_2\int_0^\tau\Lambda_\eta(r) dr}.
\end{equation}
where $c_1=C/\theta$ and $c_2=(1-\theta)/\theta$. The constants $c_1$ 
and $c_2$ depend only in $\Omega, \omega, \gamma, d$,  and are independent of $\tau$.

Let
\begin{equation}
    f(\tau)=\frac{c_1}{\tau}+c_2\int_0^\tau \Lambda_\eta(r) dr,
\end{equation}
so
\begin{equation}
    f'(\tau)=-\frac{c_1}{\tau^2}+c_2\Lambda_\eta(\tau).
\end{equation}
By optimizing in $\tau$ since $1/\tau^2$ is strictly decreasing to zero
and $\Lambda_\eta(\tau)$ is a non decreasing in $\tau$ (see Proposition \ref{prop:frequencynumber}) and not identically zero, we obtain that
there exist a $\tau_s>0$ solution of the equation
\begin{equation} \label{tauS}
\tau_s=\frac{\sqrt{c_1/c_2}}{\sqrt{\Lambda_\eta(\tau_s)}},\quad c_1/c_2=C/(1-\theta),
\end{equation}
and since $\Lambda_\eta$ is increasing in $\tau$ we have
\begin{eqnarray*}
f(\tau_s)&=&\frac{c_1}{\sqrt{c_1/c_2}}\sqrt{\Lambda_\eta(\tau_s)}+c_2\int_0^{\tau_s}\Lambda_\eta dr\\
&\le&\frac{c_1}{\sqrt{c_1/c_2}}\sqrt{\Lambda_\eta(\tau_s)}+c_2\,\tau_s\,\Lambda_\eta(\tau_s)\\
&\le& c_3\sqrt{\Lambda_\eta(\tau_s)},
\end{eqnarray*}
where $c_3=2c_1/\sqrt{c_1/c_2}=\frac{2}{\theta}\sqrt{C(1-\theta)}$. Replacing in 
\eqref{eq:inequality2} we obtain
\begin{equation}\label{inequality4}
|\psi|_{H^s(\Omega)}\le e^{c_1}e^{c_3\sqrt{\Lambda_\eta(\tau_s)}}
|\psi|_{H^s(\omega)},
\end{equation}
and we conclude the proof with $C_1=e^{c_1}$ and $C_2=c_3$ after replacing $\eta=1+s$. \\

Again since $\tau \to \Lambda_\eta(\tau)$ is non-decreasing, we deduce from  \eqref{tauS}, that
\[ \tau_s \leq \frac{\sqrt{c_1/c_2}}{\sqrt{\Lambda_\eta(0)}} = \frac{\sqrt{c_1/c_2}}{\sqrt{\lambda_1}}.
\]
Taking finally $C_3 = \sqrt{c_1/c_2}$ finishes the proof of the Theorem.
\end{proof}
\begin{remark}
    Notice that
    \begin{equation}
    \Lambda_{1+s}(\tau_s)\le \lambda,       
    \end{equation}
so we recover from the previous result the original inequality of Lebeau-Jerison \cite{Lebeau-Jerison}, but the bound in terms of the frequency number is more precise
since it depends on the coefficients of the linear combination of the eigenfunctions.
\end{remark}

\begin{remark}\label{rem:both}
    We see in the proof of Theorem \ref{thm:Lambda} 
    that the constants $C_1$ and $C_2$ appearing in
    \eqref{Lambda-ineq1} and \eqref{Lambda-ineq2}
    are the same, and they are in fact the same constants as the ones appearing in Theorem \ref{thm:Lebeau-Jerison} in \eqref{eq:Lebeau-Jerison}. Indeed, 
    if we bound $\lambda_k\le\lambda$ uniformly, then from \eqref{eq:inequality1} we obtain:
    \begin{equation*}
    |\psi|_{H^s(\Omega)}\le e^{\frac{C}{\theta}\left(1+\frac{1}{\tau}\right)}e^{\tau\lambda\frac{1-\theta}{\theta}}
    |\psi|_{H^s(\omega)}.
    \end{equation*}
    so after optimizing in $\tau$ we obtain 
    $\tau=\sqrt{\frac{C}{1-\theta}}\frac{1}{\sqrt{\lambda}}$, from which
    we obtain that
    \begin{equation}
|\psi|_{H^s(\Omega)}\le e^{c_1}e^{c_3\sqrt{\lambda}}
|\psi|_{H^s(\omega)},
\end{equation}
with the same constants $c_1=C/\theta$ and $c_3=\frac{2}{\theta}\sqrt{C(1-\theta)}$ of \eqref{inequality4}.
\end{remark}

\begin{remark}
Since $ \Lambda_1(\tau) \leq \lambda$ for all $\tau \in \mathbb R_+$, we deduce from Theorem \ref{thm:counterexample} that for any given an nonempty open set $\omega\subset\Omega$, $\omega\neq\Omega$, there
    exists $C>0$, such that for all $\lambda \ge \lambda_1$ there exist $a_k\in{\mathbb R}$ such that
    \begin{equation*}
        \psi(x)=\sum_{\lambda_k\le\lambda}a_k\varphi_k,\quad x\in\Omega,
    \end{equation*}
    satisfies
    \begin{equation}
        \|\psi\|_{L^2(\Omega)}\ge Ce^{C\sqrt{\Lambda_1(\tau)}}\|\psi\|_{L^2(\omega)},
    \end{equation}   

    for all $\tau \in \mathbb R_+$. This shows that the estimate \eqref{Lambda-ineq1} is also asymptotically optimal.
\end{remark}
\begin{remark}
Notice that the estimate \eqref{Lambda-ineq1}  in Theorem \ref{thm:Lambda} is independent of $K$, the largest index of the eigenvalues with 
eigenfunctions appearing in the finite 
sums. Therefore   it is straightforward to verify that it remains  valid 
for infinite sums  if in addition $ \Lambda_1(\tau)<\infty$ holds for all $\tau$. 
\end{remark}

\section{Numerical tests}

We consider an example where we compute the eigenfrequencies and eigenvalues with $\Omega=B(0,1)$ a unit circle, $C=1$ and different values of the constant $\theta$. We compute the first $60$ eigenvalues and their corresponding eigenvectors using Bessel functions.

The examples we consider are the followings:
\begin{equation*}
    a_k=e^{-(\lambda_k/\sqrt{\lambda})^p},\quad p=1.0;1.2;1.3.
\end{equation*}
The case $p=1$ corresponds to coefficients that decay as in the counterexample of the proof of Theorem \ref{thm:counterexample} which is
in some extend the critical case, for which the differences between 
$\lambda$ and $\Lambda_1$ will begin to appear 
and the other cases $p=1.2$ and $p=1.3$ correspond 
to faster decays where the difference will became stronger.

In Figure \ref{fig:1} we show the intersection for the case of the example with $p=1.2$ where we see the curves $\frac{C}{(1-\theta)\tau^2}$ and $\Lambda_1$, $\Lambda_2$ and $\Lambda_3$. The optimal time $\tau_s$ for the new inequality \eqref{Lambda-ineq2} corresponds to the 
intersection of these curves. The value of $\tau_0$
is shown, which corresponds to the intersection of $\frac{C}{(1-\theta)\tau^2}$ and $\Lambda_1$ and it is compared with the optimal value 
for the inequality \eqref{Lambda-ineq1} given by $\tau=\frac{\sqrt{C(1-\theta)}}{\sqrt{\lambda}}$. 

Thanks to Remark \ref{rem:both}, for given fixed values of $C$ and $\theta$, we know that the involved constants of the inequalities 
  \eqref{Lambda-ineq1} and \eqref{Lambda-ineq2}
can be taken as the same. Therefore, we can directly compare $\sqrt{\lambda}$ and $\sqrt{\Lambda_1}$ in order to compare the estimates. 
This is exactly what is shown in Figures \ref{fig:2}, \ref{fig:3} and \ref{fig:4}. We observe in these figures that the estimation with $\sqrt{\Lambda_1}$ is better than the estimation with $\sqrt{\lambda}$ notably in the cases where the coefficients of the linear combination decay exponentially (Examples 1 and 2). When the coefficients are uniform (Example 3) there are no big differences between the bounds 
in $\sqrt{\lambda}$ and $\sqrt{\Lambda_1}$ as expected.

\begin{figure}
    \centering
    \includegraphics[width=11cm]{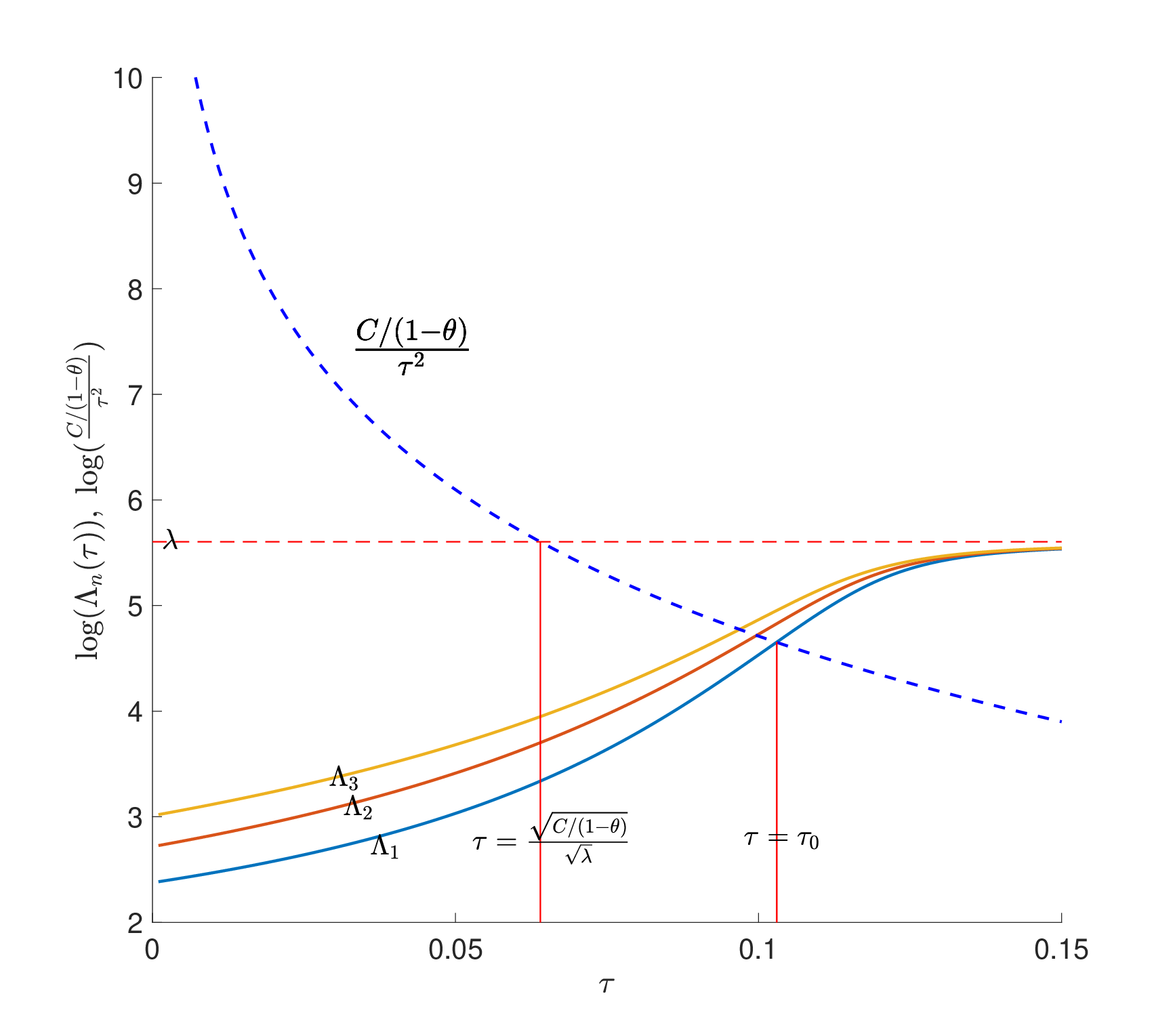}
    \caption{The intersection for the case of the Example with $p=1.2$ between $\frac{C}{(1-\theta)\tau^2}$
    and $\Lambda_\eta(\tau)$, $\eta=1+s$ gives the optimal time $\tau_s$. We show the value of $\tau_0$.}
    \label{fig:1}
\end{figure}

\begin{table}[]
    \centering
    \begin{tabular}{c|c|c|c}
      Example    &$\theta=0.1$ & $\theta=0.25$ & $\theta=0.5$ \\
          \hline
$p=1.0$ & $\tau_0=0.070$ & $\tau_0=0.075$ & $\tau_0=0.089$ \\
$p=1.2$ & $\tau_0=0.103$ & $\tau_0=0.106$ & $\tau_0=0.113$ \\
$p=1.3$ & $\tau_0=0.121$ & $\tau_0=0.126$ & $\tau_0=0.136$ \\
\hline
    \end{tabular}
    \caption{Optimal times $\tau_0$ for $\Lambda_1$ for different values of the parameter $\theta$ ($C=1$).}
    \label{tab:cases}
\end{table}

\begin{figure}
    \centering
    \includegraphics[width=11cm]{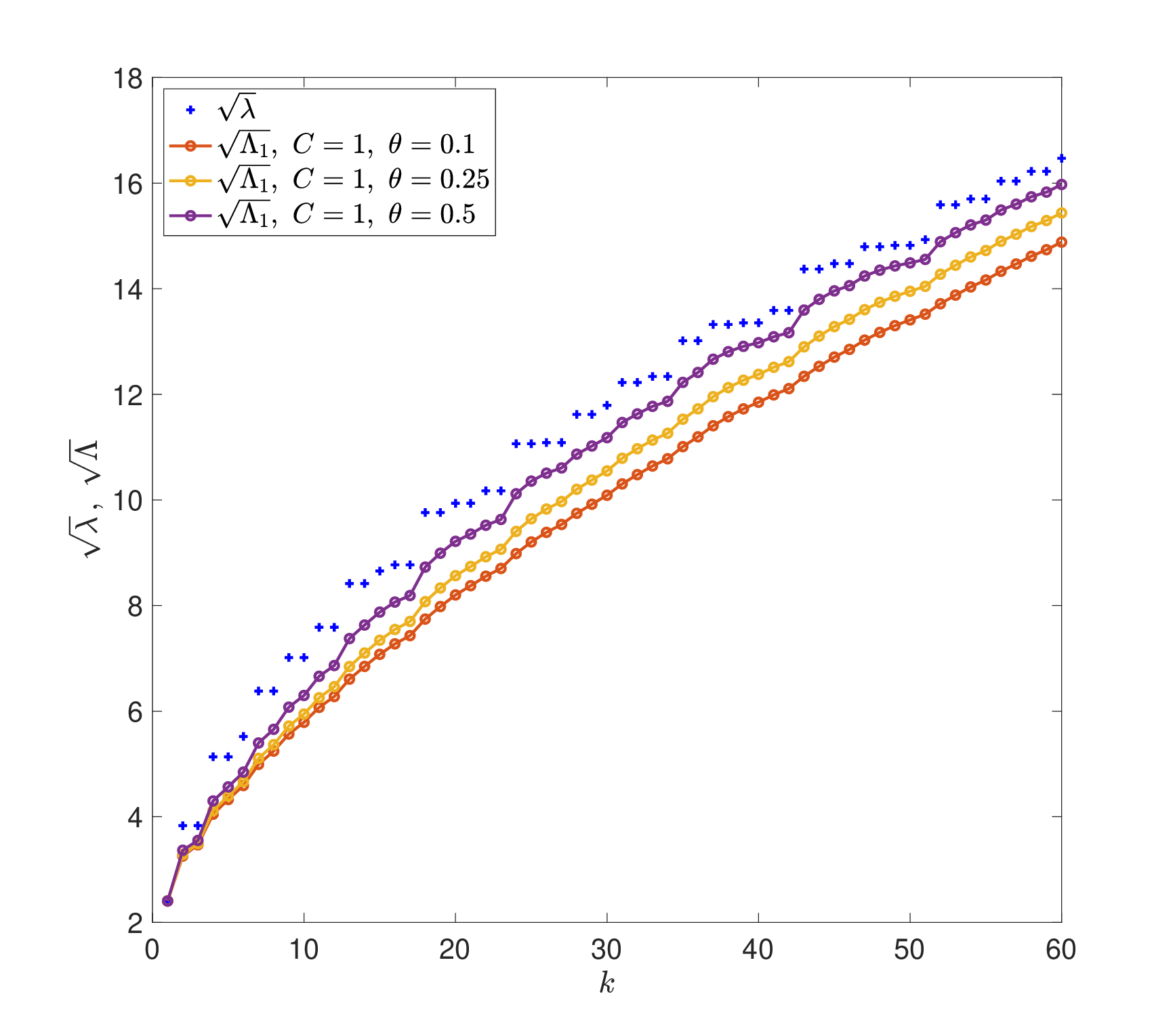}
    \caption{Comparison between $\sqrt{\lambda}$ and $\sqrt{\Lambda_1}$ for
    different choices of the constant $\theta$. Example with $p=1.0$.}
    \label{fig:2}
\end{figure}

\begin{figure}
    \centering
    \includegraphics[width=11cm]{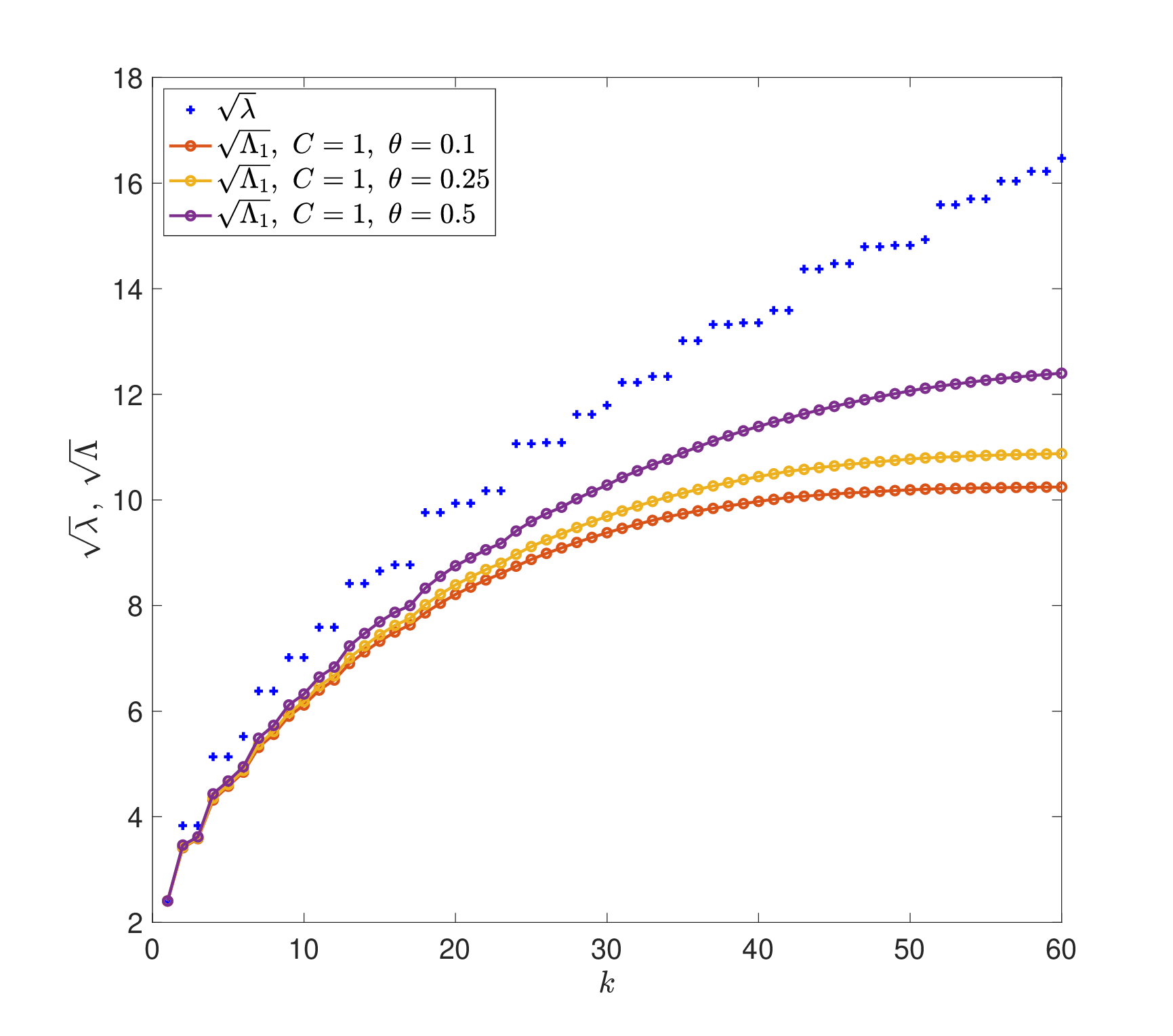}
    \caption{Comparison between $\sqrt{\lambda}$ and $\sqrt{\Lambda_1}$ for
    different choices of the constant $\theta$. Example with $p=1.2$.}
    \label{fig:3}
\end{figure}

\begin{figure}
    \centering
    \includegraphics[width=11cm]{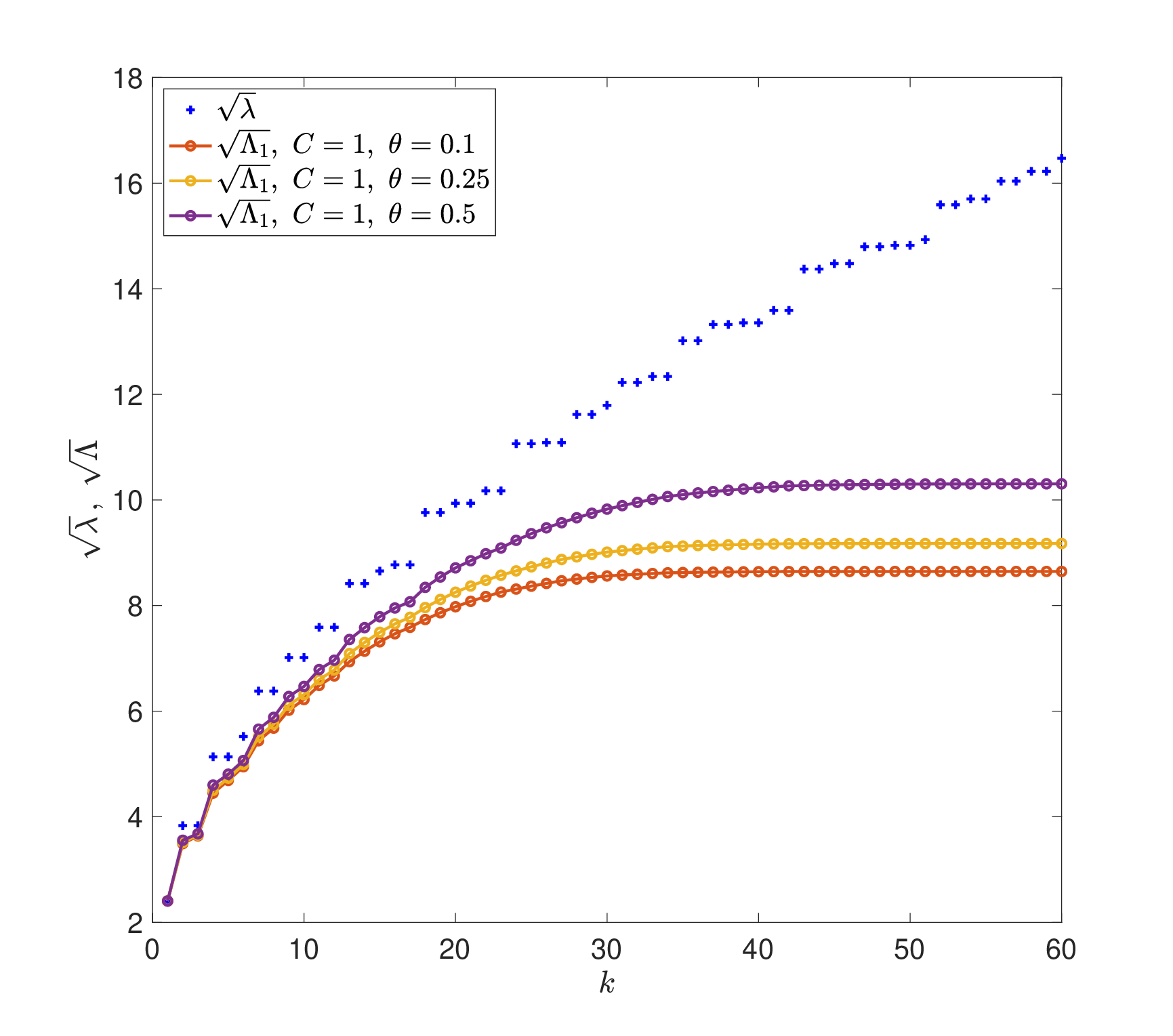}
    \caption{Comparison between $\sqrt{\lambda}$ and $\sqrt{\Lambda_1}$ for
    different choices of the constant $\theta$. Example with $p=1.3$.}
    \label{fig:4}
\end{figure}

\bibliographystyle{alpha}
\bibliography{spectral_inequality} 
\end{document}